\newtheorem{theorem}{Theorem}
\newtheorem{proposition}[theorem]{Proposition}
\newtheorem{corollary}[theorem]{Corollary}
\theoremstyle{definition}
\newtheorem{example}[theorem]{Example}
\definecolor{webgreen}{rgb}{0,.5,0}
\definecolor{webbrown}{rgb}{.6,0,0}
\newcommand{\seqnum}[1]{\href{http://www.research.att.com/cgi-bin/access.cgi/as/~njas/sequences/eisA.cgi?Anum=#1}{\underline{#1}}}
\begin{document}

\begin{center}
\vskip 1cm{\LARGE\bf  Combinatorial polynomials as moments, Hankel transforms and exponential Riordan arrays}  \vskip 1cm
\large
Paul Barry\\
School of Science\\
Waterford Institute of Technology\\
Ireland\\
\href{mailto:pbarry@wit.ie}{\tt pbarry@wit.ie} \\
\end{center}
\vskip .2 in

\begin{abstract} In the case of two combinatorial polynomials, we show that they can exhibited as moments of paramaterized families of
orthogonal polynomials, and hence derive their Hankel transforms. Exponential Riordan arrays are the main vehicles used for this.
\end{abstract}
\section{Introduction}
Let $[n]={1,2,\ldots,n}$, and let $\mathsf{SP}_n$ be the set of set-partitions of $[n]$. For a set-partition
$\pi \in \mathsf{SP}_n$, let $|\pi|$ be the number of parts in $\pi$. Then the $n$-th exponential polynomial, also known as the $n$-th
Touchard polynomial (and sometimes called the $n$-th Bell polynomial \cite{Bell}), is given by
$$e_n(z)=\sum_{\pi} z^{|\pi|}=\sum_{k=0}^n S(n,k)z^k,$$ where $$S(n,k)=\frac{1}{k!}\sum_{j=0}^k (-1)^{k-j}\binom{k}{j}j^n$$ is the
general element of the exponential Riordan array
$$\left[1, e^x-1\right].$$ This is the matrix of Stirling numbers of the second kind \seqnum{A008277}, which begins
\begin{displaymath}\left(\begin{array}{ccccccc} 1 & 0 &
0
& 0 & 0 & 0 & \ldots \\0 & 1 & 0 & 0 & 0 & 0 & \ldots \\ 0 & 1
& 1 & 0 & 0 &
0 & \ldots \\ 0 & 1 & 3 & 1 & 0 & 0 & \ldots \\ 0 & 1 & 7
& 6 & 1 & 0 & \ldots \\0 & 1 & 15 & 25 & 10 & 1
&\ldots\\
\vdots &
\vdots & \vdots & \vdots & \vdots & \vdots &
\ddots\end{array}\right).\end{displaymath}
It is well known \cite{Kratt, Rad, Siva} that the Hankel transform of these polynomials is given by
$$z^{\binom{n+1}{2}}\prod_{k=1}^n k!.$$

\noindent Now let $$A(n,k)=\sum_{k=0}^k (-1)^j (k-j)^n \binom{n+1}{j}$$ be the general term of the triangle of Eulerian numbers. The
matrix of these numbers \seqnum{A008292} begins
\begin{displaymath}\left(\begin{array}{ccccccc} 1 & 0 &
0
& 0 & 0 & 0 & \ldots \\0 & 1 & 0 & 0 & 0 & 0 & \ldots \\ 0 & 1
& 1 & 0 & 0 &
0 & \ldots \\ 0 & 1 & 4 & 1 & 0 & 0 & \ldots \\ 0 & 1 & 11
& 11 & 1 & 0 & \ldots \\0 & 1 & 26 & 66 & 26 & 1
&\ldots\\
\vdots &
\vdots & \vdots & \vdots & \vdots & \vdots &
\ddots\end{array}\right).\end{displaymath} $A(n,k)$ is the number of permutations in $\mathfrak{S}_n$ with $k$ excedances. The Eulerian
polynomials $\mathsf{EU}_n(z)$ are defined by
$$\mathsf{EU}_n(z)=\sum_{k=0}^n A(n,k)z^k.$$
It is shown in \cite{Siva} that the Hankel transform of these polynomials is given by
$$z^{\binom{n+1}{2}}\prod_{k=1}^n k!^2.$$ These two results are consequences of the following two theorems.
\begin{theorem}\label{Thm1}
The polynomials $e_n(z)$ are moments of the family of orthogonal polynomials whose coefficient array is given by the
inverse of the exponential Riordan array
$$\left[e^{z(e^x-1)}, e^x-1\right].$$
\end{theorem}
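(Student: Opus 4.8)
The plan is to run the standard correspondence between exponential Riordan arrays and orthogonal polynomials via production matrices. Recall the key fact: an invertible lower-triangular array $L$ with $L_{0,0}=1$ is the inverse of the coefficient array of a family of (formally) orthogonal polynomials, whose moment sequence is the first column of $L$, exactly when its production matrix $P_L=L^{-1}\overline{L}$ (with $\overline{L}$ obtained from $L$ by deleting the top row and shifting up, so that $\overline{L}=L\,P_L$) is tridiagonal with nonzero subdiagonal. Granting this, I would first note that the moment part of Theorem~\ref{Thm1} is automatic: the first column of an exponential Riordan array $[g,f]$ has exponential generating function $g(x)$, and for $L:=\left[e^{z(e^x-1)},\,e^x-1\right]$ the first column is $\bigl(e_n(z)\bigr)_{n\ge 0}$, since $\sum_{n\ge 0}e_n(z)\tfrac{x^n}{n!}=\sum_{k\ge 0}z^k\sum_{n\ge k}S(n,k)\tfrac{x^n}{n!}=\sum_{k\ge 0}\tfrac{z^k(e^x-1)^k}{k!}=e^{z(e^x-1)}$, where the middle equality uses that the $k$-th column generating function of $[1,e^x-1]$ is $\tfrac{(e^x-1)^k}{k!}$. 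So everything comes down to showing that $P_L$ is tridiagonal.

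For that I would use the description of the production matrix of an exponential Riordan array in terms of its $A$- and $Z$-functions: writing $\bar f$ for the compositional inverse of $f$, set $A(x)=f'(\bar f(x))$ and $Z(x)=\dfrac{g'(\bar f(x))}{g(\bar f(x))}$; then $\sum_{n,k\ge 0}(P_L)_{n,k}\,\tfrac{x^n}{n!}\,y^k=e^{xy}\bigl(Z(x)+y\,A(x)\bigr)$, and in particular $P_L$ is tridiagonal as soon as $A$ and $Z$ both have degree at most $1$. Now $f(x)=e^x-1$ gives $\bar f(x)=\log(1+x)$ and $f'(x)=e^x=1+f(x)$, so $A(x)=1+x$; and $g(x)=e^{z(e^x-1)}$ gives $g'(x)/g(x)=ze^x=z\bigl(1+f(x)\bigr)$, so $Z(x)=z(1+x)$. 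Both are affine --- this is exactly the point of building $g$ as $e^{zf}$, which forces $Z=z\,A$ --- so $P_L$ is tridiagonal, and reading off coefficients one obtains the Jacobi matrix with superdiagonal entries $1$, diagonal entries $\alpha_n=n+z$, and subdiagonal entries $\beta_n=nz$. Since $\beta_n=nz\not\equiv 0$, Favard's theorem applies and the polynomials with coefficient array $L^{-1}$ are orthogonal, with moments $e_n(z)$, as claimed.

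I do not expect a serious conceptual obstacle here; the main work is bookkeeping --- pinning down the normalization of the production-matrix formula and ruling out degeneracies in the $0$th row and column (alternatively one can sidestep the general formula and verify $\overline{L}=L\,P_L$ directly for small $n$, watching the entries stabilise to $\alpha_n=n+z$, $\beta_n=nz$). As a check I would observe that $L^{-1}=\left[e^{-zx},\,\log(1+x)\right]$, so the polynomials themselves satisfy $\sum_{n\ge 0}P_n(x)\tfrac{t^n}{n!}=e^{-zt}(1+t)^x$ and are precisely the monic Charlier polynomials with parameter $z$; these are classically orthogonal with respect to the Poisson distribution of mean $z$, whose $n$-th moment is the exponential polynomial $e_n(z)$ and whose monic recurrence coefficients are $\alpha_n=n+z$, $\beta_n=nz$. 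Finally, inserting $\beta_k=kz$ into the Hankel-determinant formula $H_n=\prod_{k=1}^n\beta_k^{\,n+1-k}$ recovers the stated transform $z^{\binom{n+1}{2}}\prod_{k=1}^n k!$, using $\prod_{k=1}^n k^{\,n+1-k}=\prod_{k=1}^n k!$.
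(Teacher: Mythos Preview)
Your proposal is correct and follows essentially the same route as the paper: compute the production matrix of $L=[e^{z(e^x-1)},e^x-1]$ via the $A$- and $Z$-functions (the paper's $r$ and $c$), observe that both are affine so the production matrix is tridiagonal with $\alpha_n=n+z$, $\beta_n=nz$, and note that the first column of $L$ has e.g.f.\ $e^{z(e^x-1)}$, the e.g.f.\ of $e_n(z)$. Your additional remarks identifying $L^{-1}$ with the Charlier coefficient array and deriving the Hankel transform match the paper's Corollary following the theorem.
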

\begin{theorem} \label{Thm2} The polynomials $\mathsf{EU}_n(z)$ are moments of the family of orthogonal polynomials whose coefficient
array is given by the
inverse of the exponential Riordan array
$$\left[\frac{e^{zx}(1-z)}{e^{zx}-ze^x},\frac{e^{x}-e^{zx}}{e^{zx}-ze^x}\right].$$
\end{theorem}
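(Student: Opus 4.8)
The plan is to mimic the proof of Theorem~\ref{Thm1}. Recall that for an exponential Riordan array $[g,f]$ with $g(0)=1$, $f(0)=0$, $f'(0)\neq 0$, the first column is the sequence $\bigl(n!\,[x^n]g(x)\bigr)_{n\ge0}$, and that $[g,f]^{-1}$ is the coefficient array of a family of monic orthogonal polynomials precisely when the production (Stieltjes) matrix of $[g,f]$ is tridiagonal --- in which case the first column of $[g,f]$ is exactly the moment sequence of the associated linear functional. Writing $B=\left[\dfrac{e^{zx}(1-z)}{e^{zx}-ze^x},\dfrac{e^{x}-e^{zx}}{e^{zx}-ze^x}\right]$ (one checks $g(0)=1$ and $f'(0)=1$, so $B$ is a legitimate exponential Riordan array), the statement thus splits into (i) identifying the first column of $B$ as $\bigl(\mathsf{EU}_n(z)\bigr)_{n\ge0}$, and (ii) showing the production matrix of $B$ is tridiagonal.

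For (i): the first column of $B$ has exponential generating function $g(x)=\dfrac{e^{zx}(1-z)}{e^{zx}-ze^x}$, which on dividing numerator and denominator by $e^{zx}$ becomes $g(x)=\dfrac{1-z}{1-ze^{(1-z)x}}$. On the other hand, the classical Eulerian generating function $\sum_{n\ge0}A_n(z)\dfrac{x^n}{n!}=\dfrac{z-1}{z-e^{(z-1)x}}$, together with the fact that in the displayed triangle $\mathsf{EU}_0(z)=1$ and $\mathsf{EU}_n(z)=zA_n(z)$ for $n\ge1$, gives after a one-line manipulation $\sum_{n\ge0}\mathsf{EU}_n(z)\dfrac{x^n}{n!}=\dfrac{(z-1)e^{(z-1)x}}{z-e^{(z-1)x}}$, which is the same function. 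This settles (i) and is routine.

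For (ii), which is the heart of the matter: the production matrix of $[g,f]$ is controlled by the two power series $A(x)=f'(\bar f(x))=\sum_j r_j x^j$ and $Z(x)=\dfrac{g'(\bar f(x))}{g(\bar f(x))}=\sum_j c_j x^j$, where $\bar f$ is the compositional inverse of $f$; it is tridiagonal exactly when $A$ has degree at most $2$ and $Z$ degree at most $1$, and then the associated monic orthogonal polynomials obey $P_{n+1}(x)=\bigl(x-(c_0+nr_1)\bigr)P_n(x)-n\bigl(c_1+(n-1)r_2\bigr)P_{n-1}(x)$. So the task is to compute $A$ and $Z$ for our $B$. For $Z$: from $\log g(x)=\log(1-z)+zx-\log\bigl(e^{zx}-ze^x\bigr)$ one gets $\dfrac{g'}{g}=z-\dfrac{ze^{zx}-ze^x}{e^{zx}-ze^x}=z+z\cdot\dfrac{e^x-e^{zx}}{e^{zx}-ze^x}=z\bigl(1+f(x)\bigr)$, so $Z(w)=z(1+w)=z+zw$ --- linear, and, pleasingly, the same $Z$ as in Theorem~\ref{Thm1}. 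For $A$: invert $f$ by setting $s=e^{(1-z)x}$, so that $f(x)=\dfrac{s-1}{1-zs}$; then $f(x)=w$ yields $s=\dfrac{1+w}{1+zw}$, whence $\bar f(w)=\dfrac{1}{1-z}\bigl(\log(1+w)-\log(1+zw)\bigr)$, and differentiating, $(\bar f)'(w)=\dfrac{1}{(1+w)(1+zw)}$. By the inverse function theorem $A(w)=\bigl((\bar f)'(w)\bigr)^{-1}=(1+w)(1+zw)=1+(1+z)w+zw^2$ --- quadratic, as needed.

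Hence the production matrix of $B$ is tridiagonal, so $B^{-1}$ is the coefficient array of the monic orthogonal polynomials given by $P_{n+1}(x)=\bigl(x-(n+(n+1)z)\bigr)P_n(x)-n^2z\,P_{n-1}(x)$; as $\beta_n=n^2z\neq0$ this is a bona fide orthogonal family, and its moments --- the first column of $B$ --- are the $\mathsf{EU}_n(z)$ by (i). As a by-product one reads off the Hankel transform $\prod_{k=1}^n\beta_k^{\,n+1-k}=\prod_{k=1}^n(k^2z)^{n+1-k}=z^{\binom{n+1}{2}}\prod_{k=1}^n(k!)^2$, recovering the evaluation of \cite{Siva}. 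The main obstacle is step (ii), and within it the explicit inversion of $f$ and the ensuing identification $A(w)=(1+w)(1+zw)$: the bookkeeping with $e^x$ and $e^{zx}$ is where a computational slip is easiest. Once $\bar f$ is in hand, extracting $A$, $Z$ and the three-term recurrence is purely mechanical.
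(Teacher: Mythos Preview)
Your proof is correct and follows essentially the same route as the paper: compute the production matrix of $\mathbf{L}=[g,f]$ via the pair $r(w)=f'(\bar f(w))=(1+w)(1+zw)$ and $c(w)=(g'/g)(\bar f(w))=z(1+w)$, conclude tridiagonality, and identify the first column with $\mathsf{EU}_n(z)$ through the known exponential generating function. The only differences are cosmetic: you obtain $A(w)$ via $1/(\bar f)'(w)$ rather than by computing $f'$ and substituting, and you spell out the derivation of the e.g.f.\ of $\mathsf{EU}_n(z)$ from the classical Eulerian generating function where the paper simply cites it; you also fold the three-term recurrence and the Hankel evaluation into the proof, whereas the paper records the latter as a separate corollary.
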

Note that in the case $z=1$, the above matrix is taken to be $\left[\frac{1}{1-x},\frac{x}{1-x}\right]$, whose inverse is the
coefficient array of the Laguerre polynomials \cite{Lah}.

While partly expository in nature, this note assumes a certain
familiarity with integer sequences, generating functions, orthogonal polynomials \cite{Chihara, wgautschi, Szego}, Riordan arrays
\cite{SGWW, Spru}, production matrices \cite{ProdMat, P_W}, and the integer Hankel transform \cite{BRP, CRI, Layman}. Many interesting
examples of sequences and Riordan arrays can be found in Neil Sloane's On-Line Encyclopedia of Integer Sequences (OEIS), \cite{SL1,
SL2}. Sequences are frequently referred to by their OEIS number. For instance, the binomial matrix $\mathbf{B}$ (``Pascal's
triangle'')
is \seqnum{A007318}.

\noindent The plan of the paper is as follows: \begin{enumerate} \item This Introduction \item
Integer
sequences, Hankel transforms, exponential Riordan arrays, orthogonal polynomials \item Proof of Theorem \ref{Thm1}
\item Proof of Theorem \ref{Thm2} \end{enumerate}
\section{Integer
sequences, Hankel transforms, exponential Riordan arrays, orthogonal polynomials} In this section, we recall known results on integer
sequences, Hankel transforms, exponential Riordan arrays and orthogonal polynomials that will be useful for the sequel.

For an integer sequence $a_n$, that is, an element of
$\mathbb{Z}^\mathbb{N}$, the power series $f_o(x)=\sum_{k=0}^{\infty}a_k x^k$ is called the \emph{ordinary generating function} or
g.f.
of the sequence, while $f_e(x)=\sum_{k=0}^{\infty}\frac{a_k}{k!} x^k$ is called the \emph{exponential generating function} or e.g.f.
of
the sequence. $a_n$ is thus the coefficient of $x^n$ in $f_o(x)$. We denote this by $a_n=[x^n]f_o(x)$. Similarly, $a_n=n![x^n]f_e(x)$.
For instance, $F_n=[x^n]\frac{x}{1-x-x^2}$ is the $n$-th Fibonacci number \seqnum{A000045}, while $n!=n![x^n]\frac{1}{1-x}$, which
says
that $\frac{1}{1-x}$ is the e.g.f. of $n!$ \seqnum{A000142}. For a power series $f(x)=\sum_{n=0}^{\infty}a_n x^n$ with $f(0)=0$ and
$f'(0)\neq 0$ we
define the reversion or compositional inverse of $f$ to be the power series $\bar{f}(x)=f^{[-1]}(x)$ such that $f(\bar{f}(x))=x$. We
sometimes write $\bar{f}= \text{Rev}f$.

The {\it
Hankel
transform} \cite{Layman} of a given sequence
$A=\{a_0,a_1,a_2,...\}$ is the
sequence of Hankel determinants $\{h_0, h_1, h_2,\dots \}$
where
$h_{n}=|a_{i+j}|_{i,j=0}^{n}$, i.e

\begin{center} \begin{equation}
 \label{gen1}
 A=\{a_n\}_{n\in\mathbb N_0}\quad \rightarrow \quad
 h=\{h_n\}_{n\in\mathbb N_0}:\quad
h_n=\left| \begin{array}{ccccc}
 a_0\ & a_1\  & \cdots & a_n  &  \\
 a_1\ & a_2\  &        & a_{n+1}  \\
\vdots &      & \ddots &          \\
 a_n\ & a_{n+1}\ &    & a_{2n}
\end{array} \right|. \end{equation} \end{center} The Hankel
transform of a sequence $a_n$ and its binomial transform are
equal.

In the case that $a_n$ has g.f. $g(x)$ expressible in the form
$$g(x)=\cfrac{a_0}{1-\alpha_0 x-
\cfrac{\beta_1 x^2}{1-\alpha_1 x-
\cfrac{\beta_2 x^2}{1-\alpha_2 x-
\cfrac{\beta_3 x^2}{1-\alpha_3 x-\cdots}}}}$$ (with $\beta_i \neq 0$ for all $i$) then
we have \cite{Kratt, Kratt1, Wall}
\begin{equation}h_n = a_0^n \beta_1^{n-1}\beta_2^{n-2}\cdots \beta_{n-1}^2\beta_n=a_0^n\prod_{k=1}^n
\beta_k^{n-k+1}.\end{equation}
Note that this is independent from $\alpha_n$. In general $\alpha_n$ and $\beta_n$ are not integers.
Such a continued fraction is associated to a monic family of orthogonal polynomials which obey the three term recurrence
$$p_{n+1}(x)=(x-\alpha_n)p_n(x)-\beta_n p_{n-1}(x), \qquad
p_0(x)=1,\qquad p_1(x)=x-\alpha_0.$$
\noindent The terms appearing in the first column of the inverse of the coefficient array of these polynomials are the moments of
family.

The \emph{exponential Riordan group} \cite {PasTri,DeutschShap,ProdMat}, is a set of infinite
lower-triangular integer matrices, where each matrix is defined by a pair of generating functions $g(x)=g_0+g_1x+g_2x^2+\ldots$ and
$f(x)=f_1x+f_2x^2+\ldots$ where $f_1\ne 0$. The associated matrix is the matrix whose $i$-th column has exponential generating
function
$g(x)f(x)^i/i!$ (the first column being indexed by 0). The matrix corresponding to the pair $f, g$ is denoted by $[g, f]$. It is
\emph{monic} if $g_0=1$. The group law is  given by \begin{displaymath} [g, f]*[h, l]=[g(h\circ f), l\circ f].\end{displaymath} The
identity for this law is $I=[1,x]$ and the inverse of $[g, f]$ is $[g, f]^{-1}=[1/(g\circ \bar{f}), \bar{f}]$ where $\bar{f}$ is the
compositional inverse of $f$. We use the notation $\mathit{e}\mathcal{R}$ to denote this group. If $\mathbf{M}$ is the matrix $[g,f]$,
and $\mathbf{u}=(u_n)_{n \ge 0}$ is an integer sequence with exponential generating function $\mathcal{U}$ $(x)$, then the sequence
$\mathbf{M}\mathbf{u}$ has exponential generating function $g(x)\mathcal{U}(f(x))$. Thus the row sums of the array $[g,f]$ are given
by
$g(x)e^{f(x)}$ since the sequence $1,1,1,\ldots$ has exponential generating function $e^x$. \begin{example} The \emph{binomial matrix}
is the matrix with general term $\binom{n}{k}$. It is realized by Pascal's triangle. As an exponential Riordan array, it is given by
$[e^x,x]$. We further have $$([e^x,x])^m=[e^{mx},x].$$
\end{example}
\begin{example} We have
$$\left[ e^{z(e^x-1)}, e^x-1\right]=\left[e^{z(e^x-1)},x\right]\cdot \left[1, e^x-1\right].$$
\end{example} A more interesting factorization is given by
\begin{proposition} The general term of the matrix $\mathbf{L}=\left[ e^{z(e^x-1)}, e^x-1\right]$ is given by
$$L_{n,k}=\sum_{j=0}^n S(n,j)\binom{j}{k}z^{j-k}.$$
\end{proposition}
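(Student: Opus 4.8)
The plan is to compute the exponential generating function of the $k$-th column of $\mathbf{L}$ directly and then recognize it as a combination of the columns of the Stirling array $[1, e^x-1]$. By the definition of an exponential Riordan array, the $k$-th column of $\mathbf{L}=[e^{z(e^x-1)},e^x-1]$ has e.g.f. $e^{z(e^x-1)}(e^x-1)^k/k!$, so that $L_{n,k}=n![x^n]\bigl(e^{z(e^x-1)}(e^x-1)^k/k!\bigr)$. The first step is therefore to rewrite this function in a form from which coefficients can be read off.

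The key manipulation is to expand the exponential as a power series in $w:=e^x-1$. Since $e^{zw}=\sum_{m\ge 0}z^m w^m/m!$, we obtain
$$e^{z(e^x-1)}\frac{(e^x-1)^k}{k!}=\frac{1}{k!}\sum_{m\ge 0}\frac{z^m}{m!}(e^x-1)^{m+k}.$$
Re-indexing by $j=m+k$ and using $\dfrac{1}{k!\,(j-k)!}=\dfrac{1}{j!}\dbinom{j}{k}$, this becomes
$$e^{z(e^x-1)}\frac{(e^x-1)^k}{k!}=\sum_{j\ge k}\binom{j}{k}z^{j-k}\,\frac{(e^x-1)^j}{j!}.$$
Now I would invoke the fact, recalled in the Introduction, that $(e^x-1)^j/j!$ is the e.g.f. of the $j$-th column of $[1,e^x-1]$, i.e. $n![x^n]\bigl((e^x-1)^j/j!\bigr)=S(n,j)$. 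Applying $n![x^n]$ term by term (the sum being finite since $S(n,j)=0$ for $j>n$) yields
$$L_{n,k}=\sum_{j\ge k}\binom{j}{k}z^{j-k}S(n,j)=\sum_{j=0}^n S(n,j)\binom{j}{k}z^{j-k},$$
where in the last equality the terms with $j<k$ are restored harmlessly, since $\binom{j}{k}=0$ there. This is the claimed identity.

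An alternative route, which avoids coefficient extraction, is to use the group law to factor $\mathbf{L}=[1,e^x-1]*[e^{zx},x]$ — indeed $[1,e^x-1]*[e^{zx},x]=[\,e^{zx}\circ(e^x-1),\,x\circ(e^x-1)\,]=[e^{z(e^x-1)},e^x-1]$ — and to note that the $(j,k)$ entry of the weighted binomial array $[e^{zx},x]$ is $\binom{j}{k}z^{j-k}$; then $L_{n,k}=\sum_j S(n,j)\binom{j}{k}z^{j-k}$ is simply the row-by-column product, with $[1,e^x-1]$ contributing the $S(n,j)$. Either way there is no serious obstacle: the only point requiring a little care is the re-indexing $j=m+k$ and the observation that the resulting sum terminates.
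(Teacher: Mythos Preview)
Your proof is correct. Your alternative route---factoring $\mathbf{L}=[1,e^x-1]\cdot[e^{zx},x]$ via the group law and reading off the product of the Stirling and weighted-binomial triangles---is precisely the paper's own argument, which dispatches the result in two lines.

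Your primary route, by contrast, is a direct coefficient extraction: you expand the column e.g.f. $e^{z(e^x-1)}(e^x-1)^k/k!$ as a power series in $e^x-1$, re-index, and recognize the Stirling numbers. This is a perfectly valid and self-contained verification that does not rely on the Riordan group multiplication rule; in effect, it re-derives that rule in this special case. The factorization approach is shorter and more structural (it exhibits $\mathbf{L}$ as Stirling-then-binomial, which is conceptually clean), while your direct computation has the virtue of needing only the definition of an exponential Riordan array and nothing about the group law.
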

\begin{proof} A straight-forward calculation shows that
$$\left[ e^{z(e^x-1)}, e^x-1\right]=\left[1,e^x-1\right]\cdot \left[e^{zx},x\right].$$
The assertion now follows since the general term of $\left[1,e^x-1\right]$ is $S(n,k)$ and that of
$\left[e^{zx},x\right]$ is $\binom{n}{k}z^{n-k}$.
\end{proof}

\noindent As an example of the calculation of an inverse, we have
the following proposition.

\begin{proposition} $$\left[e^{z(e^x-1)},e^x-1\right]^{-1}=\left[e^{-zx},\ln(1+x)\right].$$

 \end{proposition}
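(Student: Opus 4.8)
The plan is to apply directly the inversion formula for the exponential Riordan group recalled above, namely $[g,f]^{-1}=[1/(g\circ \bar{f}),\bar{f}]$, with $g(x)=e^{z(e^x-1)}$ and $f(x)=e^x-1$. So the proof reduces to two computations: identifying the compositional inverse $\bar{f}$, and evaluating the composite $g\circ\bar{f}$.

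First I would observe that $f(x)=e^x-1$ satisfies $f(0)=0$ and $f'(0)=1\neq 0$, so the reversion exists; solving $y=e^x-1$ for $x$ gives $x=\ln(1+y)$, hence $\bar{f}(x)=\ln(1+x)$. This already pins down the second component of the inverse array.

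Next I would compute the first component. Substituting $\bar{f}$ into $g$ gives
$$g(\bar{f}(x))=e^{z\left(e^{\ln(1+x)}-1\right)}=e^{z\left((1+x)-1\right)}=e^{zx},$$
and therefore $1/(g\circ\bar{f})(x)=e^{-zx}$. Combining the two pieces yields $\left[e^{z(e^x-1)},e^x-1\right]^{-1}=\left[e^{-zx},\ln(1+x)\right]$, as claimed.

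There is no genuine obstacle here; the only point requiring a moment's care is the cancellation $e^{\ln(1+x)}-1=x$ inside the exponential, which is exactly what makes the inner exponential factor collapse to the elementary form $e^{zx}$. One can, if desired, sanity-check the result by verifying $\left[e^{z(e^x-1)},e^x-1\right]\cdot\left[e^{-zx},\ln(1+x)\right]=[1,x]$ using the group law $[g,f]*[h,l]=[g\,(h\circ f),\,l\circ f]$, which gives $[\,e^{z(e^x-1)}e^{-z(e^x-1)},\ln(1+(e^x-1))\,]=[1,x]$.
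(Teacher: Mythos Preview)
Your proof is correct and follows the same approach as the paper, namely applying the inversion formula $[g,f]^{-1}=[1/(g\circ\bar f),\bar f]$ and identifying $\bar f(x)=\ln(1+x)$. The paper's own proof is actually terser: it only records the observation that $\bar f(x)=\ln(1+x)$ and leaves the computation of $1/(g\circ\bar f)=e^{-zx}$ implicit, whereas you spell this out explicitly and add a verification via the group law.
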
 \begin{proof} This follows since with $$f(x)=e^x-1$$ we have $$\bar{f}(x)=\ln(1+x).$$ \end{proof}

\begin{proposition}
$$\left[\frac{e^{zx}(1-z)}{e^{zx}-ze^x},\frac{e^{x}-e^{zx}}{e^{zx}-ze^x}\right]^{-1}=\left[1+zx,
\frac{1}{z-1}\ln\left(\frac{1+zx}{1+x}\right)\right].$$ Note that in the case $z=1$, we have
$$\left[\frac{1}{1-x},\frac{x}{1-x}\right]^{-1}=\left[\frac{1}{1+x},\frac{x}{1+x}\right].$$
\end{proposition}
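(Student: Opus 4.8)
The plan is to apply the inversion formula for exponential Riordan arrays recalled above. Write the array on the left as $[g,f]$ with
$$g(x)=\frac{(1-z)e^{zx}}{e^{zx}-ze^x},\qquad f(x)=\frac{e^x-e^{zx}}{e^{zx}-ze^x}.$$
Since $[g,f]^{-1}=\bigl[1/(g\circ\bar f),\,\bar f\bigr]$, it suffices to (i) compute the compositional inverse $\bar f$ and (ii) compute the series $1/(g\circ\bar f)$. One first checks that $f$ is a legitimate symbol, i.e.\ $f(0)=0$ and $f'(0)\ne0$: this is immediate from the leading terms of numerator and denominator, which give $f(x)=x+O(x^2)$.

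For step (i) I would linearize in the exponentials. Put $y=f(x)$ and $a=e^x$, $b=e^{zx}$, so that $y=\tfrac{a-b}{b-za}$. Clearing denominators gives $y(b-za)=a-b$, which rearranges to the linear relation $b(1+y)=a(1+zy)$; hence $e^{(z-1)x}=b/a=\tfrac{1+zy}{1+y}$, and taking logarithms yields
$$\bar f(y)=\frac{1}{z-1}\ln\!\left(\frac{1+zy}{1+y}\right),$$
which is the second entry on the right-hand side (and it has the correct normalization $\bar f(0)=0$, $\bar f'(0)=1$).

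For step (ii) I would substitute $s=\bar f(x)$ into $g$, again writing $a=e^s$, $b=e^{zs}$. The computation in (i), now read with $x$ as the output of $f$ at input $s$, gives $b(1+x)=a(1+zx)$, so $b=a\,\tfrac{1+zx}{1+x}$ and $b-za=a\,\tfrac{(1+zx)-z(1+x)}{1+x}=a\,\tfrac{1-z}{1+x}$. Therefore
$$g(\bar f(x))=\frac{(1-z)b}{b-za}=\frac{(1-z)\,a\,\frac{1+zx}{1+x}}{a\,\frac{1-z}{1+x}}=1+zx,$$
so the first entry of the inverse is $1/(g\circ\bar f)=\dfrac{1}{1+zx}$. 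As a check, letting $z\to1$ gives $\tfrac{1}{1+zx}\to\tfrac{1}{1+x}$ and $\tfrac{1}{z-1}\ln\tfrac{1+zx}{1+x}\to\tfrac{x}{1+x}$ (from $\ln(1+zx)=\ln(1+x)+(z-1)\tfrac{x}{1+x}+O((z-1)^2)$), recovering the stated special case $\bigl[\tfrac1{1-x},\tfrac x{1-x}\bigr]^{-1}=\bigl[\tfrac1{1+x},\tfrac x{1+x}\bigr]$; in particular the first entry on the right-hand side of the proposition should read $\tfrac{1}{1+zx}$.

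The only genuine work is the simplification of $g\circ\bar f$, and the one thing to be careful about is to push the change of variables $a=e^s$, $b=e^{zs}$ through both $f$ and $g$ consistently — once the relation $b(1+x)=a(1+zx)$ is in hand, the factor $(1-z)$ in $g$ cancels exactly and there is no residual series to expand. So I expect no real obstacle: the argument is essentially the few displayed lines above.
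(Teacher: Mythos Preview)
Your argument is correct and follows the same route as the paper: apply the Riordan inversion formula $[g,f]^{-1}=[1/(g\circ\bar f),\bar f]$ and solve $y=f(x)$ for $x$ to obtain $\bar f$. The paper's proof is terser --- it simply asserts the formula for $\bar f$ without the linearization in $a=e^x$, $b=e^{zx}$ that you carry out --- and in fact contains a slip, writing ``$f(x)=\frac{e^{zx}(1-z)}{e^{zx}-ze^x}$'' where the second component $\frac{e^x-e^{zx}}{e^{zx}-ze^x}$ is meant.

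Your further observation is also correct: since $g(\bar f(x))=1+zx$, the first entry of the inverse array should be $\frac{1}{1+zx}$, not $1+zx$ as printed; the $z\to1$ limit you compute confirms this, matching $\bigl[\frac{1}{1+x},\frac{x}{1+x}\bigr]$. The paper does not actually use this first entry downstream (the proof of Theorem~\ref{Thm2} works with the production matrix of $\mathbf{L}$ itself, computed from $g,f$ directly), so the typo is inert for the main results, but your correction is right.
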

\begin{proof} This follows since with $$f(x)=\frac{e^{zx}(1-z)}{e^{zx}-ze^x}$$ we have
$$\bar{f}(x)=\frac{1}{z-1}\ln\left(\frac{1+zx}{1+x}\right).$$ \end{proof}

An important concept for the sequel is that of production matrix. The concept of a \emph{production matrix}
\cite{ProdMat_0, ProdMat} is a general one, but for this note we find it convenient to review it in the context of Riordan arrays.
Thus
let $P$ be an infinite matrix (most often it will have integer entries). Letting $\mathbf{r}_0$ be the row vector
$$\mathbf{r}_0=(1,0,0,0,\ldots),$$ we define $\mathbf{r}_i=\mathbf{r}_{i-1}P$, $i \ge 1$. Stacking these rows leads to another
infinite
matrix which we denote by $A_P$. Then $P$ is said to be the \emph{production matrix} for $A_P$. \noindent If we let
$$u^T=(1,0,0,0,\ldots,0,\ldots)$$ then we have $$A_P=\left(\begin{array}{c} u^T\\u^TP\\u^TP^2\\\vdots\end{array}\right)$$ and
$$DA_P=A_PP$$ where $D=(\delta_{i,j+1})_{i,j \ge 0}$ (where $\delta$ is the usual Kronecker symbol). In \cite{P_W} $P$ is
called the Stieltjes matrix associated to $A_P$. In \cite{ProdMat}, we find the following result concerning matrices that are
production matrices for exponential Riordan arrays. \begin{proposition} Let $A=\left(a_{n,k}\right)_{n,k \ge 0}=[g(x),f(x)]$ be an
exponential Riordan array and let \begin{equation}\label{seq_def} c(y)=c_0 + c_1 y +c_2 y^2 + \ldots, \qquad r(y)=r_0 + r_1 y + r_2
y^2
+ \ldots\end{equation} be two formal power series that that \begin{eqnarray}\label{r_def} r(f(x))&=&f'(x) \\ \label{c_def}
c(f(x))&=&\frac{g'(x)}{g(x)}. \end{eqnarray} Then \begin{eqnarray} (i)\qquad a_{n+1,0}&=&\sum_{i} i! c_i a_{n,i} \\ (ii)\qquad
a_{n+1,k}&=& r_0 a_{n,k-1}+\frac{1}{k!} \sum_{i\ge k}i!(c_{i-k}+k r_{i-k+1})a_{n,i} \end{eqnarray}  or, defining $c_{-1}=0$,
\begin{equation}\label{array_def} a_{n+1,k}=\frac{1}{k!}\sum_{i\ge k-1} i!(c_{i-k}+k r_{i-k+1})a_{n,i}.\end{equation} Conversely,
starting from the sequences defined by (\ref{seq_def}), the infinite array $\left(a_{n,k}\right)_{n,k\ge 0}$ defined by
(\ref{array_def}) is an exponential Riordan array. \end{proposition}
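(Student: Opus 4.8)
The plan is to prove the forward implication by a short computation with the exponential generating functions of the columns of $A$, and then to deduce the converse from it by a uniqueness argument.

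First I would use that, by the definition of $[g,f]$, the $k$-th column of $A$ has exponential generating function $A_k(x):=g(x)f(x)^k/k!$, so that $a_{n,k}=n![x^n]A_k(x)$. Differentiating and using the defining relations (\ref{r_def})--(\ref{c_def}) gives, for $k\ge 1$,
\[
A_k'(x)=\frac{g'(x)}{g(x)}\,\frac{g(x)f(x)^k}{k!}+f'(x)\,\frac{g(x)f(x)^{k-1}}{(k-1)!}=c(f(x))\,A_k(x)+r(f(x))\,A_{k-1}(x),
\]
while for $k=0$ the second summand is absent. Expanding $c(y)=\sum_i c_iy^i$ and $r(y)=\sum_j r_jy^j$ and regrouping the powers of $f$ into the column generating functions $A_m(x)$ yields
\[
A_k'(x)=\sum_{m\ge k}c_{m-k}\,\frac{m!}{k!}\,A_m(x)+\sum_{m\ge k-1}r_{m-k+1}\,\frac{m!}{(k-1)!}\,A_m(x)
\]
(again with the $r$-sum absent when $k=0$). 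Applying $n![x^n]$ turns the left side into $a_{n+1,k}$, since $n![x^n]A_k'(x)=(n+1)!\,[x^{n+1}]A_k(x)$, and turns the right side into a weighted sum of the $a_{n,m}$; after writing $\tfrac{m!}{(k-1)!}=k\cdot\tfrac{m!}{k!}$ and adopting the convention $c_{-1}=0$ (so that the first sum may also be taken to start at $m=k-1$) this is exactly (\ref{array_def}). Isolating the $m=k-1$ term, whose coefficient is $\tfrac{(k-1)!}{k!}\cdot k\,r_0=r_0$, gives (ii); putting $k=0$ kills the $r$-contribution and the nonexistent $a_{n,-1}$ term, giving (i). Equivalently, (\ref{array_def}) says precisely that the matrix $P$ with $P_{m,k}=\tfrac{m!}{k!}\big(c_{m-k}+k\,r_{m-k+1}\big)$ is the production matrix of $A$.

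For the converse I would start from $c(y)$ and $r(y)$ with $r_0\neq 0$, define $f$ as the unique power series solution with $f(0)=0$ of $f'(x)=r(f(x))$ (so $f_1=r_0\neq 0$), and define $g$ as the unique solution with $g(0)=1$ of $g'(x)/g(x)=c(f(x))$, i.e. $g(x)=\exp\!\big(\int_0^x c(f(t))\,dt\big)$. Then $[g,f]$ is a (monic) exponential Riordan array; by the forward direction its entries satisfy (\ref{array_def}); and since $g(0)=1$, $f(0)=0$ force its initial row to be $(1,0,0,\ldots)$, while (\ref{array_def}) determines row $n+1$ from row $n$ uniquely (the sum over $m$ being finite because $a_{n,m}=0$ for $m>n$), the array built from the sequences (\ref{seq_def}) via (\ref{array_def}) must coincide with $[g,f]$, hence is an exponential Riordan array.

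All the steps are routine; the only delicate point is the bookkeeping of the factorial ratios and of the two index shifts $m\mapsto m-k$ and $m\mapsto m-k+1$ in passing from $A_k'(x)$ to (\ref{array_def}), in particular the identity $\tfrac{m!}{(k-1)!}=k\cdot\tfrac{m!}{k!}$ and the treatment of the boundary index $m=k-1$ through $c_{-1}=0$. I expect this indexing, together with being explicit in the converse about how $f$ and $g$ are recovered from $r$ and $c$ (and why $r_0\neq 0$ is needed for $[g,f]$ to qualify as an exponential Riordan array), to be the only, rather mild, obstacle.
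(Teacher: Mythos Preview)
The paper does not prove this proposition: it is quoted verbatim from \cite{ProdMat} and stated without proof, so there is no ``paper's own proof'' to compare against. Your argument is correct and is in fact essentially the argument given in the cited source: differentiate the column exponential generating functions $A_k(x)=g(x)f(x)^k/k!$, substitute $f'=r\circ f$ and $g'/g=c\circ f$, re-express the resulting powers of $f$ in terms of the $A_m$, and read off the recursion by extracting coefficients; the converse follows by reconstructing $f$ and $g$ from $r$ and $c$ as solutions of the indicated ODEs and invoking uniqueness of the row-by-row recursion. Your remarks about needing $r_0\neq 0$ and the initial row $(1,0,0,\ldots)$ are appropriate caveats that the proposition as stated leaves implicit.
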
 \noindent A consequence of this proposition is that
$P=\left(p_{i,j}\right)_{i,j\ge 0}$ where $$p_{i,j}=\frac{i!}{j!}(c_{i-j}+jr_{r-j+1})  \qquad (c_{-1}=0).$$ Furthermore, the bivariate
exponential generating function $$\phi_P(t,z)=\sum_{n,k} p_{n,k}t^k \frac{z^n}{n!}$$ of the matrix $P$ is given by $$\phi_P(t,z) =
e^{tz}(c(z)+t r(z)).$$ Note in particular that we have $$r(x)=f'(\bar{f}(x))$$ and $$c(x)=\frac{g'(\bar{f}(x))}{g(\bar{f}(x))}.$$
\begin{example} We consider the exponential Riordan array $[\frac{1}{1-x},x]$, \seqnum{A094587}. This array \cite{Lah} has elements
\begin{displaymath}\left(\begin{array}{ccccccc} 1 & 0 & 0 & 0 & 0 & 0 & \ldots \\1 & 1 & 0 & 0 & 0 & 0 & \ldots \\ 2 & 2 & 1 & 0 & 0 &
0 & \ldots \\ 6 & 6 & 3 & 1 & 0 & 0 & \ldots \\ 24 & 24 & 12 & 4 & 1 & 0 & \ldots \\120 & 120  & 60 & 20 & 5 & 1 &\ldots\\ \vdots &
\vdots & \vdots & \vdots & \vdots & \vdots & \ddots\end{array}\right)\end{displaymath} and general term $[k \le n] \frac{n!}{k!}$ with
inverse \begin{displaymath}\left(\begin{array}{ccccccc} 1 & 0 & 0 & 0 & 0 & 0 & \ldots \\-1 & 1 & 0 & 0 & 0 & 0 & \ldots \\ 0 & -2 & 1
& 0 & 0 & 0 & \ldots \\ 0 & 0 & -3 & 1 & 0 & 0 & \ldots \\ 0 & 0 & 0& -4 & 1 & 0 & \ldots \\0 & 0  & 0 & 0 & -5 & 1 &\ldots\\ \vdots &
\vdots & \vdots & \vdots & \vdots & \vdots & \ddots\end{array}\right)\end{displaymath} which is the array $[1-x,x]$. In particular, we
note that the row sums of the inverse, which begin $1,0,-1,-2,-3,\ldots$ (that is, $1-n$), have e.g.f. $(1-x)\exp(x)$. This sequence
is
thus the binomial transform of the sequence with e.g.f. $(1-x)$ (which is the sequence starting $1,-1,0,0,0,\ldots$). In order to
calculate the production matrix $\mathbf{P}$ of $[\frac{1}{1-x},x]$ we note that $f(x)=x$, and hence we have $f'(x)=1$ so
$f'(\bar{f}(x))=1$. Also $g(x)=\frac{1}{1-x}$ leads to $g'(x)=\frac{1}{(1-x)^2}$, and so, since $\bar{f}({x})=x$, we get
$$\frac{g'(\bar{f}(x))}{g(\bar{f}(x))}=\frac{1}{1-x}.$$ Thus the generating function for $\mathbf{P}$ is
$$e^{tz}\left(\frac{1}{1-z}+t\right).$$ Thus $\mathbf{P}$ is the matrix $[\frac{1}{1-x},x]$ with its first row removed. \end{example}
\begin{example} We consider the exponential Riordan array $[1, \frac{x}{1-x}]$. The general term of this matrix \cite{Lah} may be
calculated as
follows: \begin{eqnarray*}T_{n,k}&=&\frac{n!}{k!}[x^n]\frac{x^k}{(1-x)^k}\\ &=&\frac{n!}{k!}[x^{n-k}](1-x)^{-k}\\
&=&\frac{n!}{k!}[x^{n-k}]\sum_{j=0}^{\infty}\binom{-k}{j}(-1)^jx^j\\ &=&\frac{n!}{k!}[x^{n-k}]\sum_{j=0}^{\infty}\binom{k+j-1}{j}x^j\\
&=&\frac{n!}{k!}\binom{k+n-k-1}{n-k}\\ &=&\frac{n!}{k!}\binom{n-1}{n-k}.\end{eqnarray*} Thus its row sums, which have e.g.f. $\exp
\left(\frac{x}{1-x}\right)$, have general term $\sum_{k=0}^n \frac{n!}{k!}\binom{n-1}{n-k}$. This is \seqnum{A000262}, the `number of
``sets of lists": the number of partitions of $\{1,..,n\}$ into any number of lists, where a list means an ordered subset'. Its
general
term is equal to $(n-1)!L_{n-1}(1,-1)$. The inverse of $\left[1, \frac{x}{1-x}\right]$ is the exponential Riordan array
$\left[1,\frac{x}{1+x}\right]$, \seqnum{A111596}. The row sums of this sequence have e.g.f. $\exp\left(\frac{x}{1+x}\right)$, and
start
$1, 1, -1, 1, 1, -19, 151, \ldots$. This is \seqnum{A111884}. To calculate the production matrix of $\left[1,\frac{x}{1+x}\right]$ we
note that $g'(x)=0$, while $\bar{f}(x)=\frac{x}{1+x}$ with $f'(x)=\frac{1}{(1+x)^2}$. Thus $$f'(\bar{f}(x))=(1+x)^2,$$ and so the
generating function of the production matrix is given by $$ e^{tz}t(1+z)^2.$$ \noindent The production matrix of the inverse
begins \begin{displaymath}\left(\begin{array}{ccccccc} 0 & 1 & 0 & 0 & 0 & 0 & \ldots \\0 & 2 & 1 & 0 & 0 & 0 & \ldots \\ 0 & 2 & 4 &
1
& 0 & 0 & \ldots \\ 0 & 0 & 6 & 6 & 1 & 0 & \ldots \\0 & 0 & 0 & 12 & 8 & 1 & \ldots \\0 & 0 & 0 & 0 & 20 & 10 &\ldots\\ \vdots &
\vdots & \vdots & \vdots & \vdots & \vdots & \ddots\end{array}\right).\end{displaymath} \end{example} \begin{example} The exponential
Riordan array $\mathbf{A}= \left[\frac{1}{1-x},\frac{x}{1-x}\right]$, or \begin{displaymath}\left(\begin{array}{ccccccc} 1 & 0 & 0 & 0
& 0 & 0 & \ldots \\1 & 1 & 0 & 0 & 0 & 0 & \ldots \\ 2 & 4 & 1 & 0 & 0 & 0 & \ldots \\ 6 & 18 & 9 & 1 & 0 & 0 & \ldots \\ 24 & 96 & 72
& 16 & 1 & 0 & \ldots \\120 & 600  & 600 & 200 & 25 & 1 &\ldots\\ \vdots & \vdots & \vdots & \vdots & \vdots & \vdots &
\ddots\end{array}\right),\end{displaymath} has general term $$ T_{n,k}=\frac{n!}{k!}\binom{n}{k}.$$  Its inverse is
$\left[\frac{1}{1+x},\frac{x}{1+x}\right]$ with general term
$(-1)^{n-k}\frac{n!}{k!}\binom{n}{k}$. This is \seqnum{A021009}, the triangle of coefficients of the Laguerre polynomials $L_n(x)$.
The
production matrix $\left[\frac{1}{1-x},\frac{x}{1-x}\right]$  is given by \begin{displaymath}\left(\begin{array}{ccccccc} 1 & 1 & 0 & 0
& 0 & 0 &
\ldots \\1 & 3 & 1 & 0 & 0 & 0 & \ldots \\ 0 & 4 & 5 & 1 & 0 & 0 & \ldots \\ 0 & 0 & 9 & 7 & 1 & 0 & \ldots \\0 & 0 & 0 & 16 & 9 & 1 &
\ldots \\0 & 0 & 0 & 0 & 25 & 11 &\ldots\\ \vdots & \vdots & \vdots & \vdots & \vdots & \vdots &
\ddots\end{array}\right).\end{displaymath} \end{example} \begin{example} The exponential Riordan array
$\left[e^x,\ln\left(\frac{1}{1-x}\right)\right]$, or \begin{displaymath}\left(\begin{array}{ccccccc} 1 & 0 & 0 & 0 & 0 & 0 & \ldots
\\1
& 1 & 0 & 0 & 0 & 0 & \ldots \\ 1 & 3 & 1 & 0 & 0 & 0 & \ldots \\ 1 & 8 & 6 & 1 & 0 & 0 & \ldots \\ 1 & 24 & 29 & 10 & 1 & 0 & \ldots
\\1 & 89 & 145 & 75 & 15 & 1 &\ldots\\ \vdots & \vdots & \vdots & \vdots & \vdots & \vdots & \ddots\end{array}\right)\end{displaymath}
is the coefficient array for the polynomials $$_2F_0(-n,x;-1)$$ which are an unsigned version of the Charlier polynomials (of order
$0$) \cite{wgautschi, Roman, Szego}. This is \seqnum{A094816}. It is equal to $$[e^x,x]\left[ 1,
\ln\left(\frac{1}{1-x}\right)\right],$$ or the product of the binomial array $\mathbf{B}$ and the array of (unsigned) Stirling numbers
of the first kind. The production matrix of the inverse of this matrix is given by \begin{displaymath}\left(\begin{array}{ccccccc} -1
&
1 & 0 & 0 & 0 & 0 & \ldots \\1 & -2 & 1 & 0 & 0 & 0 & \ldots \\ 0 & 2 & -3 & 1 & 0 & 0 & \ldots \\ 0 & 0 & 3 & -4 & 1 & 0 & \ldots \\0
& 0 & 0 & 4 & -5 & 1 & \ldots \\0 & 0 & 0 & 0 & 5 & -6 &\ldots\\ \vdots & \vdots & \vdots & \vdots & \vdots & \vdots &
\ddots\end{array}\right)\end{displaymath} which indicates the orthogonal nature of these polynomials. We can prove this as follows. We
have $$\left[e^x, \ln\left(\frac{1}{1-x}\right)\right]^{-1}=\left[e^{-(1-e^{-x})},1-e^{-x}\right].$$ Hence $g(x)=e^{-(1-e^{-x})}$ and
$f(x)=1-e^{-x}$. We are thus led to the equations \begin{eqnarray*} r(1-e^{-x})&=&\,e^{-x},\\ c(1-e^{-x})&=&-e^{-x},\end{eqnarray*}
with solutions $r(x)=1-x$, $c(x)=x-1$. Thus the bivariate generating function for the production matrix of the inverse array is
$$e^{tz}(z-1+t(1-z)),$$ which is what is required. \end{example}

\section{Proof of Theorem \ref{Thm1}}

\begin{proof} We show first that with $\mathbf{L}=\left[e^{z(e^x-1)},e^x-1\right]$, the matrix $\mathbf{L}^{-1}$ which is given by
$$\mathbf{L}^{-1}=\left[e^{z(e^x-1)},e^x-1\right]^{-1}=\left[e^{-zx},\ln(1+x)\right],$$ is the coefficient array of a family of
orthogonal polynomials. To this end, we calculate the production array of $\left[e^{z(e^x-1)},e^x-1\right]$.
We have $f(x)=e^x-1$, $f'(x)=e^x$ and $\bar{f}(x)=\ln(1+x)$. Thus
$$c(x)=f'(\bar{f}(x))=1+x.$$
Similarly, for $g(x)=e^{z(e^x-1)}$, we have $g'(x)=ze^{z(e^x-1)+x}$ and so
$$r(x)=\frac{g'(\bar{f}(x))}{g(\bar{f}(x))}=\frac{ze^{zx}(1+x)}{e^{zx}}=z(1+x).$$
Thus the production matrix sought has generating function
$$e^{tw}(c(w)+t r(w))=e^{tw}(1+w+t(z(1+w))).$$ Thus the production array $\mathbf{P}_{\mathbf{L}}$ is tri-diagonal, beginning
\begin{displaymath}\left(\begin{array}{ccccccc} z &
1 & 0 & 0 & 0 & 0 & \ldots \\z & z+1 & 1 & 0 & 0 & 0 & \ldots \\ 0 & 2z & z+2 & 1 & 0 & 0 & \ldots \\ 0 & 0 & 3z & z+3 & 1 & 0 & \ldots
\\0
& 0 & 0 & 4z & z+4 & 1 & \ldots \\0 & 0 & 0 & 0 & 5z & z+5 &\ldots\\ \vdots & \vdots & \vdots & \vdots & \vdots & \vdots &
\ddots\end{array}\right).\end{displaymath}
Now it is well known that $$\sum_{k=0}^n \frac{e_k(z)}{k!} x^k = e^{z(e^x-1)},$$ and hence the polynomials $e_n(z)$ are the moments the
family of orthogonal polynomials whose coefficient array is $\mathbf{L}^{-1}.$
\end{proof}
\begin{corollary} The Hankel transform of $e_n(z)$ is $z^{\binom{n+1}{2}}\prod_{k=1}^n k!$.
\end{corollary}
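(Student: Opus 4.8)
The plan is to read off the Jacobi continued-fraction parameters directly from the production matrix $\mathbf{P}_{\mathbf{L}}$ computed in the proof of Theorem \ref{Thm1}, and then to substitute them into the Hankel determinant formula (2).

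First I would recall that, by Theorem \ref{Thm1}, the sequence $e_n(z)$ is the moment sequence of the monic orthogonal polynomial family whose coefficient array is $\mathbf{L}^{-1}$; equivalently $e_n(z)$ occupies the first column of $\mathbf{L}=[e^{z(e^x-1)},e^x-1]$, consistent with the identity $\sum_{k\ge 0}\frac{e_k(z)}{k!}x^k=e^{z(e^x-1)}$ used above, and in particular $a_0=e_0(z)=1$. Since the production matrix $\mathbf{P}_{\mathbf{L}}$ is tridiagonal, it is precisely the Stieltjes matrix of this family: its diagonal entries are the recurrence coefficients $\alpha_n$ and its subdiagonal entries are the coefficients $\beta_n$ in $p_{n+1}(x)=(x-\alpha_n)p_n(x)-\beta_n p_{n-1}(x)$. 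Reading these off from the displayed form of $\mathbf{P}_{\mathbf{L}}$ gives $\alpha_n=n+z$ for $n\ge 0$ and $\beta_n=nz$ for $n\ge 1$, so the ordinary generating function of $e_n(z)$ has the Jacobi continued fraction expansion with these parameters.

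Next I would invoke formula (2): since $a_0=1$ and $\beta_k=kz$ (nonzero for $z\ne 0$, so the formula applies for generic $z$ and then extends to all $z$ by polynomiality in $z$ of both sides),
$$h_n=\prod_{k=1}^n \beta_k^{\,n-k+1}=\prod_{k=1}^n (kz)^{\,n-k+1}=z^{\sum_{k=1}^n (n-k+1)}\prod_{k=1}^n k^{\,n-k+1}.$$
The remaining step is elementary bookkeeping: $\sum_{k=1}^n (n-k+1)=\sum_{j=1}^n j=\binom{n+1}{2}$, and $\prod_{k=1}^n k^{\,n-k+1}=\prod_{k=1}^n k!$, the latter because in $\prod_{k=1}^n k!$ the factor $j$ is contributed once by each $k$ with $j\le k\le n$, i.e.\ exactly $n-j+1$ times. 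Combining these gives $h_n=z^{\binom{n+1}{2}}\prod_{k=1}^n k!$, as claimed.

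I do not anticipate any genuine obstacle: all the real content sits in Theorem \ref{Thm1} and in the already-recorded tridiagonal form of $\mathbf{P}_{\mathbf{L}}$. The only points needing care are matching the convention (subdiagonal of the production matrix $\leftrightarrow$ the $\beta_n$, with the superdiagonal normalised to $1$) and disposing of the degenerate value $z=0$, where $e_n(0)=[n=0]$ makes both sides vanish for $n\ge 1$, by the polynomial-identity argument just mentioned.
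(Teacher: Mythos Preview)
Your proposal is correct and follows essentially the same approach as the paper: read off $\beta_n=nz$ from the tridiagonal production matrix of $\mathbf{L}$ (equivalently, from the Jacobi continued fraction for the moment generating function) and substitute into the Hankel determinant formula (2). You are slightly more explicit than the paper in justifying $\prod_{k=1}^n k^{\,n-k+1}=\prod_{k=1}^n k!$ and in handling the degenerate value $z=0$, but the argument is the same.
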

\begin{proof} From the above, we have that the generating function of $e_n(z)$ is given by the continued fraction
$$\cfrac{1}{1-zx-
\cfrac{zx^2}{1-(z+1)x-
\cfrac{2zx^2}{1-(z+2)x-
\cfrac{3zx^2}{1-\cdots}}}}.$$ In other words, $\beta_n=nz$. Thus
the Hankel transform of $e_n(z)$ is given by
$$\prod_{k=1}^n \beta_k^{n-k+1}=\prod_{k=1}^n (kz)^{n-k+1}=z^{\binom{n+1}{2}}\prod_{k=1}^n k!.$$
\end{proof}
\noindent We note that the Hankel transform of the row sums of $\mathbf{L}=\left[e^{z(e^x-1)},e^x-1\right]$ is equal to
$$(z+1)^{\binom{n+1}{2}}\prod_{k=1}^n k!.$$
\noindent Note also that if we take  $z=e^t$, we obtain a solution to the restricted Toda chain \cite{Toda}.

\section{Proof of Theorem \ref{Thm2} }
\begin{proof}
 We show first that with $\mathbf{L}=\left[\frac{e^{zx}(1-z)}{e^{zx}-ze^x},\frac{e^{x}-e^{zx}}{e^{zx}-ze^x}\right]$, the matrix
 $\mathbf{L}^{-1}$ which is given by
$$\mathbf{L}^{-1}=\left[\frac{e^{zx}(1-z)}{e^{zx}-ze^x},\frac{e^{x}-e^{zx}}{e^{zx}-ze^x}\right]^{-1}=\left[1+zx,
\frac{1}{z-1}\ln\left(\frac{1+zx}{1+x}\right)\right],$$ is the coefficient array of a family of orthogonal polynomials. To this end, we
calculate the production array of $\mathbf{L}=\left[\frac{e^{zx}(1-z)}{e^{zx}-ze^x},\frac{e^{x}-e^{zx}}{e^{zx}-ze^x}\right]$.
We have $f(x)=\frac{e^{x}-e^{zx}}{e^{zx}-ze^x}$, $\bar{f}(x)=\frac{1}{z-1}\ln\left(\frac{1+zx}{1+x}\right)$ and
$$f'(x)=\frac{(1-z)^2e^{x(1+z)}}{(e^{zx}-ze^x)^2}.$$
Thus $$c(x)=f'(\bar{f}(x))=(1+x)(1+zx).$$
Also $g(x)=\frac{e^{zx}(1-z)}{e^{zx}-ze^x}$, which implies that $g'(x)=xf'(x)$ and so
$$r(x)=\frac{g'(\bar{f}(x))}{g(\bar{f}(x))}=z(1+x).$$
Thus the generating function of $\mathbf{P}_{\mathbf{L}}$ is given by
$$e^{tw}(c(w)+t r(w))=e^{tw}((1+w)(1+zw)+t(z(1+w))).$$
Thus the production array $\mathbf{P}_{\mathbf{L}}$ is tri-diagonal, beginning
\begin{displaymath}\left(\begin{array}{ccccccc} z &
1 & 0 & 0 & 0 & 0 & \ldots \\z & 2z+1 & 1 & 0 & 0 & 0 & \ldots \\ 0 & 4z & 3z+2 & 1 & 0 & 0 & \ldots \\ 0 & 0 & 9z & 4z+3 & 1 & 0 &
\ldots \\0
& 0 & 0 & 16z & 5z+4 & 1 & \ldots \\0 & 0 & 0 & 0 & 25z & 6z+5 &\ldots\\ \vdots & \vdots & \vdots & \vdots & \vdots & \vdots &
\ddots\end{array}\right).\end{displaymath} Now it is known that $$\sum_{k=0}^n \frac{\mathsf{EU}_k(z)}{k!} x^k =
\frac{e^{zx}(1-z)}{e^{zx}-ze^x},$$ and hence the polynomials $e_n(z)$ are the moments the family of orthogonal polynomials whose
coefficient array is $\mathbf{L}^{-1}.$
\end{proof}
\begin{corollary} The Hankel transform of $\mathsf{EU}_n(z)$ is $z^{\binom{n+1}{2}}\prod_{k=1}^n k!^2$.
\end{corollary}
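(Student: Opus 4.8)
The plan is to proceed exactly as in the proof of the corollary to Theorem~\ref{Thm1}. In the proof of Theorem~\ref{Thm2} we have already shown that $\mathbf{L}^{-1}$ is the coefficient array of a family of orthogonal polynomials whose production matrix $\mathbf{P}_{\mathbf{L}}$ is tridiagonal, with diagonal entries $\alpha_n = (n+1)z + n$ and subdiagonal entries $\beta_n = n^2 z$ (the sequence $z, 4z, 9z, 16z, 25z, \ldots$ appearing immediately below the main diagonal). Since the first column of $\mathbf{L}^{-1}$ records the moments $\mathsf{EU}_n(z)$, and the tridiagonal production matrix encodes precisely the three-term recurrence of the associated orthogonal polynomials, the generating function of $\mathsf{EU}_n(z)$ has the Jacobi continued fraction expansion
$$\cfrac{1}{1-zx- \cfrac{zx^2}{1-(2z+1)x- \cfrac{4zx^2}{1-(3z+2)x- \cfrac{9zx^2}{1-\cdots}}}},$$
so that $\beta_k = k^2 z$ for every $k \ge 1$.

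Next I would feed this into the standard Hankel determinant formula recalled in Section~2, namely $h_n = a_0^n\prod_{k=1}^n \beta_k^{\,n-k+1}$, which here has $a_0 = 1$. Substituting $\beta_k = k^2 z$ gives
$$h_n = \prod_{k=1}^n (k^2 z)^{\,n-k+1} = z^{\sum_{k=1}^n (n-k+1)}\left(\prod_{k=1}^n k^{\,n-k+1}\right)^{2} = z^{\binom{n+1}{2}}\left(\prod_{k=1}^n k^{\,n-k+1}\right)^{2},$$
where the exponent of $z$ has been collapsed via $\sum_{k=1}^n (n-k+1) = \binom{n+1}{2}$.

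Finally I would invoke the elementary identity $\prod_{k=1}^n k! = \prod_{k=1}^n k^{\,n-k+1}$ --- each factor $j$ occurs in $j!, (j+1)!, \ldots, n!$, i.e. exactly $n-j+1$ times --- which converts the right-hand side into $z^{\binom{n+1}{2}}\prod_{k=1}^n k!^2$, as claimed. There is essentially no obstacle here: all the substantive work, namely the identification of the production array in Theorem~\ref{Thm2}, is already done, and the only points requiring care are reading off the subdiagonal entries correctly (they are the \emph{squares} $k^2 z$, in contrast to the linear multiples $kz$ occurring for $e_n(z)$ in Theorem~\ref{Thm1}) and the bookkeeping in the exponent identity.
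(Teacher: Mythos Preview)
Your proof is correct and follows exactly the same route as the paper: read off $\beta_k = k^2 z$ from the tridiagonal production matrix of Theorem~\ref{Thm2}, write the corresponding Jacobi continued fraction, and apply the formula $h_n = \prod_{k=1}^n \beta_k^{\,n-k+1}$. You even spell out the identity $\prod_{k=1}^n k! = \prod_{k=1}^n k^{\,n-k+1}$, which the paper leaves implicit. One small slip: the moments $\mathsf{EU}_n(z)$ are recorded in the first column of $\mathbf{L}$, not of $\mathbf{L}^{-1}$ (the coefficient array is $\mathbf{L}^{-1}$, and the moments live in the first column of its inverse, i.e.\ of $\mathbf{L}$).
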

\begin{proof} From the above, we have that the generating function of $\mathsf{EU}_n(z)$ is given by the continued fraction
$$\cfrac{1}{1-zx-
\cfrac{zx^2}{1-(2z+1)x-
\cfrac{4zx^2}{1-(3z+2)x-
\cfrac{9zx^2}{1-\cdots}}}}.$$ In other words, $\beta_n=n^2z$. Thus
the Hankel transform of $\mathsf{EU}_n(z)$ is given by
$$\prod_{k=1}^n \beta_k^{n-k+1}=\prod_{k=1}^n (k^2z)^{n-k+1}=z^{\binom{n+1}{2}}\prod_{k=1}^n k!^2.$$
\end{proof}

\bigskip
\hrule
\bigskip
\noindent 2010 {\it Mathematics Subject Classification}: Primary
11B83; Secondary 05A15, 11C20, 15B05, 15B36, 42C05.
\noindent \emph{Keywords:} Integer sequence, exponential Riordan array, Touchard polynomial, exponential polynomial,
moments, orthogonal polynomials, Hankel determinant, Hankel transform.

\bigskip
\hrule
\bigskip
\noindent Concerns sequences \seqnum{A000045},
\seqnum{A000142},
\seqnum{A000262},
\seqnum{A007318},
\seqnum{A008277},
\seqnum{A008292},
\seqnum{A021009},
\seqnum{A094587},
\seqnum{A094816},
\seqnum{A111596}.

\end{document}